\documentclass[12pt]{extarticle}
\usepackage{extsizes}
\usepackage[reqno]{amsmath} 
\usepackage{amsmath}
\usepackage{amssymb}
\usepackage{amsthm}
\usepackage{amscd}
\usepackage{amsfonts}
\usepackage{graphicx}
\usepackage{dsfont}
\usepackage{fancyhdr}
\usepackage{enumerate}
\usepackage{youngtab}
\usepackage[utf8]{inputenc}
\usepackage{comment}


\usepackage{subfigure}
\usepackage[margin=1in]{geometry}
\usepackage{graphicx}
\usepackage[noend]{algorithmic}
\usepackage{algorithm}
\usepackage{color}
\usepackage{hyperref}
\usepackage{notation}

\theoremstyle{definition}
\newtheorem{theorem}{Theorem}

\newtheorem{lemma}[theorem]{Lemma}
\newtheorem{corollary}[theorem]{Corollary}

\newcommand*\samethanks[1][\value{footnote}]{\footnotemark[#1]}
\title{Colouring the normalized Laplacian}
\author{Gabriel Coutinho\thanks{Department of Computer Science, Federal University of Minas Gerais, Belo Horizonte, Brazil. [gabriel,rafaelgrandsire,celiopassos]@dcc.ufmg.br} \and Rafael Grandsire\samethanks \and Célio Passos\samethanks}

\begin{document}

\maketitle

\begin{abstract} 
    We apply Cauchy's interlacing theorem to derive some eigenvalue bounds to the chromatic number using the normalized Laplacian matrix, including a combinatorial characterization of when equality occurs. Further, we introduce some new expansion type of parameters which generalize the Cheeger constant of a graph, and relate them to the colourings which meet our eigenvalue bound with equality. Finally, we exhibit a family of examples, which include the graphs that appear in the statement of the Erd\H{o}s-Faber-Lovász conjecture.
    \begin{center}
    \bf Keywords
    \end{center}
    Normalized Laplacian, interlacing, chromatic number, Cheeger constant.
\end{abstract}

\section{Introduction}

Let $G$ be a graph without isolated vertices, $\aA$ its adjacency matrix, and $\dD$ the diagonal matrix that records the degrees of the vertices. The matrix
\[\lL = \mathbf I - \dD^{-1/2} \aA \dD^{-1/2}\]
is known as the normalized Laplacian matrix of $G$. The standard reference for an account of the interplay between combinatorics and the spectral properties of $\lL$ is Fan Chung's book \cite{FanChungSGT}. In this paper, we are interested in which properties of $\lL$ can give information about graph colourings. It is unlikely that information contained solely in the eigenvalues of $\lL$ could determine $\chi(G)$. However one can find some eigenvalue bounds to $\chi(G)$. An eigenvalue bound is considered relatively serious if it achieves equality for at least one non-trivial infinite class of graphs, and, moreover, if the equality implies some combinatorial structure. We present a strictly speaking new bound in Theorem \ref{thm:bound}. Across the paper, we study some consequences of the equality cases in this bound, and in the last section we comment on a possible interesting class of examples connected to the Erd\H{o}s-Faber-Lovász conjecture. We also introduce in Section \ref{sec:4} new expansion-type of parameters, and compare some of their properties to the spectrum of $\lL$.

\section{Interlacing bound for the normalized Laplacian}

We begin by re-stating the famous interlacing theorem due to Cauchy (see for instance \cite[Theorem 2.1]{HaemersInterlacing}).
\begin{theorem}\label{thm:interlacing}

Let $\mathbf P$ be a real $n \times m$ matrix such that $\mathbf P^T \mathbf P = \mathbf I$ and let $\mathbf M$ be a symmetric $n \times n$ matrix with eigenvalues $\theta_1 \geq ... \geq \theta_n$. Define $\mathbf N = \mathbf P^T \mathbf M \mathbf P$ and let $\bf N$ have eigenvalues $\mu_1 \geq ... \geq \mu_m$ and respective eigenvectors $\mathbf{v}_1,...,\mathbf{v}_m$. Then the following hold.

\begin{enumerate}[(i)]

\item The eigenvalues of $\bf N$ interlace those of $\bf M$, meaning that, for $i = 1,...,m$, we have
\[\theta_{n-m+i} \leq \mu_i \leq \theta_i.\]

\item If $\mu_i = \theta_i$ or $\mu_i = \theta_{n-m+i}$ for some $i \in [1,m]$, then $\nN$ has a $\mu_i$-eigenvector $\vv$ so that $\pP \vv$ is a $\mu_i$-eigenvector of $\mM$.

\item If, for some integer $\ell$, $\mu_i = \theta_i$ for $i = 1,...,\ell$ (or $\mu_i = \theta_{n-m+i}$ for $i = \ell,...,m$), then $\mathbf P \vv_i$ is a $\mu_i$-eigenvector for $\bf M$ for $i = 1,...,\ell$ (respectively $i = \ell,...,m$).

\item If the interlacing is tight, that is, if there is an integer $k \in [0,m]$ so that 
\[\theta_i = \mu_i\quad \text{for $1 \leq i \leq k$,} \quad \text{and} \quad  \theta_{n-m+i} = \mu_i \quad \text{for $k+1 \leq i \leq m$},\]
then
\[\mM \pP = \pP \nN.\]
\end{enumerate} \qed

\end{theorem}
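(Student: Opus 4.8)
The plan is to obtain part (iv) as an assembly of part (iii), applied once at the top of the spectrum and once at the bottom. First I would record the two elementary observations that drive everything. Since $\mathbf{P}^T\mathbf{P}=\mathbf{I}$, the columns of $\mathbf{P}$ are orthonormal, so the vectors $\mathbf{s}_i:=\mathbf{P}\mathbf{v}_i$ satisfy $\mathbf{s}_i^T\mathbf{s}_j=\mathbf{v}_i^T\mathbf{v}_j=\delta_{ij}$, and moreover $\mathbf{s}_i^T\mathbf{M}\mathbf{s}_j=\mathbf{v}_i^T\mathbf{N}\mathbf{v}_j=\mu_j\delta_{ij}$. Thus the $\mathbf{s}_i$ are orthonormal and the compression of $\mathbf{M}$ to their span is diagonal; the content of parts (iii)--(iv) is precisely that, under the stated equality hypotheses, each $\mathbf{s}_i$ is a genuine eigenvector of $\mathbf{M}$ and not merely a vector on which the compression acts diagonally.

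Granting part (iii), both blocks are immediate. Tight interlacing furnishes an index $k\in[0,m]$ with $\mu_i=\theta_i$ for $1\le i\le k$ and $\mu_i=\theta_{n-m+i}$ for $k+1\le i\le m$. Applying the first clause of part (iii) with $\ell=k$ shows that $\mathbf{s}_i=\mathbf{P}\mathbf{v}_i$ is a $\mu_i$-eigenvector of $\mathbf{M}$ for each $i\le k$, while applying the parenthetical (``respectively'') clause of part (iii) with $\ell=k+1$ shows the same for $k+1\le i\le m$. Hence every column image $\mathbf{s}_i$ satisfies $\mathbf{M}\mathbf{s}_i=\mu_i\mathbf{s}_i$.

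Combining the two blocks, $\mathbf{M}\mathbf{P}\mathbf{v}_i=\mu_i\mathbf{P}\mathbf{v}_i=\mathbf{P}(\mu_i\mathbf{v}_i)=\mathbf{P}\mathbf{N}\mathbf{v}_i$ for every $i$. Since $\mathbf{v}_1,\dots,\mathbf{v}_m$ form a basis of $\mathbb{R}^m$, the linear maps $\mathbf{M}\mathbf{P}$ and $\mathbf{P}\mathbf{N}$ agree on a basis and therefore coincide, giving $\mathbf{M}\mathbf{P}=\mathbf{P}\mathbf{N}$, which is the claim.

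The genuine difficulty is not in this final assembly but upstream, in the equality analysis behind parts (ii)--(iii), which I would prove through the Courant--Fischer and Ky Fan variational characterizations; the bottom clause of part (iii) is itself obtained by running the top-clause argument on $-\mathbf{M}$, whose eigenvalues are $-\theta_n\ge\cdots\ge-\theta_1$ and whose compression is $-\mathbf{N}$ with the same eigenvectors $\mathbf{v}_i$, so the only care needed is the bookkeeping of the reversed index ordering when translating $\theta_{n-m+i}$ into an eigenvalue of $-\mathbf{M}$. For the heart of part (iii) one shows that if $\sum_{i\le\ell}\mu_i=\sum_{i\le\ell}\theta_i$ then the orthonormal frame $\{\mathbf{s}_i\}_{i\le\ell}$ maximizes $\sum_i \mathbf{w}_i^T\mathbf{M}\mathbf{w}_i$ over orthonormal $\ell$-frames, which forces its span to be $\mathbf{M}$-invariant, after which the diagonal compression $\mathbf{s}_i^T\mathbf{M}\mathbf{s}_j=\mu_j\delta_{ij}$ upgrades each $\mathbf{s}_i$ to an eigenvector. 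The subtlety to watch is eigenvalue multiplicity at the interlacing boundary, say $\theta_\ell=\theta_{\ell+1}$, where the maximizing subspace is not unique; one must verify that every such maximizer is a sum of eigenspaces of $\mathbf{M}$, so that the conclusion holds for the specific vectors $\mathbf{s}_i$ rather than only for their span.
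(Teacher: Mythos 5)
The first thing to say is that the paper contains no proof of this statement: it is quoted as Cauchy's interlacing theorem from the literature (the citation is to Haemers' work on interlacing), and the statement is closed with a QED mark without any argument. So there is no internal proof to compare yours against; the natural benchmark is the proof in the cited source.

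Measured against that benchmark, your assembly of (iv) out of (iii) is correct and is in fact the standard argument: with $k$ the tightness index, the first clause of (iii) at $\ell = k$ covers $i \le k$, the second clause at $\ell = k+1$ covers $k+1 \le i \le m$ (each clause being vacuous in the degenerate cases $k = 0$ or $k = m$), so $\mathbf{M}\mathbf{P}\mathbf{v}_i = \mu_i\mathbf{P}\mathbf{v}_i = \mathbf{P}\mathbf{N}\mathbf{v}_i$ for all $i$, and agreement of $\mathbf{M}\mathbf{P}$ and $\mathbf{P}\mathbf{N}$ on the basis $\{\mathbf{v}_i\}$ forces $\mathbf{M}\mathbf{P} = \mathbf{P}\mathbf{N}$. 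Your Ky Fan outline for (iii) is also sound: equality of the partial sums makes $\{\mathbf{P}\mathbf{v}_i\}_{i\le\ell}$ a maximizing frame; any maximizing $\ell$-dimensional subspace must contain every eigenvector of $\mathbf{M}$ with eigenvalue above $\theta_\ell$ and be orthogonal to every one with eigenvalue below it, hence is $\mathbf{M}$-invariant; and the diagonal compression $\mathbf{s}_i^T\mathbf{M}\mathbf{s}_j = \mu_j\delta_{ij}$ then upgrades each $\mathbf{P}\mathbf{v}_i$ to a genuine eigenvector. You correctly flag the multiplicity subtlety at $\theta_\ell = \theta_{\ell+1}$, and this route trades the usual induction (deriving (iii) from (ii)) for a trace-maximization argument, which is a legitimate alternative. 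What keeps the proposal from being a complete proof of the statement as quoted is that parts (i) and (ii) are only named, not proven: in particular, (ii) is a single-index equality statement that the trace argument does not reach, and it needs its own Courant--Fischer equality analysis. As submitted, you have a full proof of (iv) conditional on (iii), plus a viable plan for the rest -- which is more than the paper itself offers, but not yet the whole theorem.
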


Let $G$ be a graph on $n$ vertices. Let $\mathbf A(G) = \bf A $ be its adjacency matrix, and $\bf D$ the diagonal matrix that records the degree sequence of $G$. The matrix $\bf D- \bf A$ is usually called the Laplacian matrix of $G$, but in this paper we are interested in its normalized version
\[\mathbf {L} = \mathbf{I} - \mathbf {D}^{-1/2} \mathbf A \mathbf D^{-1/2}.\]
Say $G$ contains a (proper) $k$-colouring, that is, its vertex set can be partitioned into $k$ independent sets $S_1,...,S_k$. Let $\pi$ be this partition and $\mathbf S$ be its characteristic matrix, that is, the $01$ matrix whose rows are indexed by the vertices, columns by the colour classes, and an entry equal to $1$ indicates that the vertex corresponding to the row has been coloured by the class corresponding to the column. By $G / \pi$ we denote the quotient graph, meaning, the graph whose vertices are the classes of $\pi$ and between any two vertices there is a weighted edge whose weight is equal to the number of edges between the corresponding classes in $G$. In particular, $G / \pi$ has no loops because $\pi$ is a (proper) colouring, and $\mathbf S^T \mathbf A \mathbf S = \mathbf A(G / \pi)$. 

We define
\[\mathbf P = \mathbf D^{1/2} \mathbf S \mathbf N^{-1/2},\]
where $\bf N$ is the diagonal matrix whose $k$th diagonal entry is equal to sum of the degrees of the vertices in $S_k$. In particular, this turns the columns of $\mathbf P$ into normalized vectors.

Thus
\begin{align} \label{eq:PTP=I}
\mathbf P^T \mathbf P = \mathbf I.
\end{align}
Moreover, 
\begin{align} \label{eq:interlacing}
\mathbf P^T  \mathbf L \mathbf P = \mathbf I - \mathbf N^{-1/2} \mathbf A(G/ \pi) \mathbf N^{-1/2} = \mathbf L(G / \pi).
\end{align}
Observe that $0$ is an eigenvalue of both $\bf L$ and $\mathbf{L}(G / \pi)$ with respective eigenvectors $\mathbf D^{1/2} \mathbf{1}$ and $\mathbf N^{1/2} \mathbf{1}$. If $0 = \lambda_0 \leq \lambda_1 \leq ... \leq \lambda_{n-1}$ are the eigenvalues of $\bf L$, then we define
\[\sigma_k = \sum_{j = 1}^k \lambda_{n-j},\]
that is, the sum of the largest $k$ eigenvalues of $\bf L$.
\begin{theorem}\label{thm:bound}
If $\sigma_{k-1} < k$, then $k < \chi(G)$.
\end{theorem}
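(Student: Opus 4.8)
The plan is to prove the contrapositive: assuming $G$ admits a proper $k$-colouring (so $\chi(G) \le k$), I will show that $\sigma_{k-1} \ge k$. All the machinery needed has already been assembled in the paragraphs preceding the statement. I fix such a colouring, let $\pi$ be the corresponding partition into $k$ independent sets, and form $\mathbf{P} = \mathbf{D}^{1/2}\mathbf{S}\mathbf{N}^{-1/2}$. By \eqref{eq:PTP=I} we have $\mathbf{P}^T\mathbf{P} = \mathbf{I}$, and by \eqref{eq:interlacing} the quotient $\mathbf{P}^T\mathbf{L}\mathbf{P} = \mathbf{L}(G/\pi)$ is a symmetric $k \times k$ matrix. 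Writing its eigenvalues as $\mu_1 \ge \dots \ge \mu_k$, I apply Theorem \ref{thm:interlacing}(i) with $\mathbf{M} = \mathbf{L}$ and $m = k$; since the eigenvalues of $\mathbf{L}$ listed in decreasing order are $\lambda_{n-1} \ge \dots \ge \lambda_0$, this yields $\mu_i \le \lambda_{n-i}$ for every $i \in \{1,\dots,k\}$.

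The second ingredient is to evaluate $\sum_{i=1}^k \mu_i$ exactly. Because $\pi$ is a proper colouring, the quotient $G/\pi$ has no loops, so the diagonal of $\mathbf{A}(G/\pi)$ vanishes; hence $\mathbf{N}^{-1/2}\mathbf{A}(G/\pi)\mathbf{N}^{-1/2}$ has zero diagonal and $\operatorname{tr}\mathbf{L}(G/\pi) = \operatorname{tr}\mathbf{I} = k$, giving $\sum_{i=1}^k \mu_i = k$. On the other hand, $0$ is an eigenvalue of $\mathbf{L}(G/\pi)$ with eigenvector $\mathbf{N}^{1/2}\mathbf{1}$, as already observed, and it is the \emph{smallest} one: the quotient is a normalized Laplacian (equivalently, $\mathbf{L}(G/\pi) = \mathbf{P}^T\mathbf{L}\mathbf{P}$ is a congruence-type restriction of the positive semidefinite $\mathbf{L}$), so $\mu_k = 0$. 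Therefore $\sum_{i=1}^{k-1}\mu_i = k$ as well.

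Combining the two ingredients finishes the argument: summing the interlacing inequalities $\mu_i \le \lambda_{n-i}$ over $i = 1,\dots,k-1$ gives
\[k = \sum_{i=1}^{k-1}\mu_i \le \sum_{i=1}^{k-1}\lambda_{n-i} = \sigma_{k-1},\]
which is exactly the contrapositive of the claim. The step I expect to require the most care is the bookkeeping of the eigenvalue orderings: one must match the top $k-1$ eigenvalues $\mu_1,\dots,\mu_{k-1}$ of the quotient against the top $k-1$ eigenvalues $\lambda_{n-1},\dots,\lambda_{n-k+1}$ of $\mathbf{L}$, and confirm that discarding the shared eigenvalue $0$ is legitimate. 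Here it is, precisely because in the decreasing order $0$ occupies the bottom slot $\mu_k$, so it is the single index left out of the sum rather than interfering with the interlacing bounds on the entries above it.
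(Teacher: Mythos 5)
Your proposal is correct and follows essentially the same route as the paper: pass to the contrapositive, apply interlacing to $\mathbf{L}(G/\pi) = \mathbf{P}^T\mathbf{L}\mathbf{P}$, use $\operatorname{tr}\mathbf{L}(G/\pi) = k$, and discard the eigenvalue $0$ of the quotient to upgrade $\sigma_k \ge k$ to $\sigma_{k-1} \ge k$. The only difference is that you spell out why $0$ sits at the bottom of the quotient's spectrum (positive semidefiniteness of $\mathbf{P}^T\mathbf{L}\mathbf{P}$), a point the paper leaves implicit.
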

\begin{proof}
This is a consequence of Equations \eqref{eq:PTP=I} and \eqref{eq:interlacing} and Theorem \ref{thm:interlacing}. In particular, let $k \geq \chi(G)$, and consider a partition $\pi$ of $V(G)$ into $k$ independent sets.  The eigenvalues of $\mathbf L(G / \pi)$ interlace those of $\bf L$, so
\[k = \tr \mathbf L(G / \pi) \leq \sigma_{k}.\]
We can however ignore the least eigenvalue of $\mathbf L(G / \pi)$ which is $0$, thus the inequality still holds for $\sigma_{k-1}$.
\end{proof}

Let $\pi = \{S_1,...,S_k\}$ be any $k$-partition of $V(G)$. Then the entries of $\aA(G /\pi)$ record the number of edges between the classes of $\pi$. We recall the usual notation $e(S,T)$ that counts the number of edges between sets $S$ and $T$, and $\Vol (S)$ which is the sum of the degrees of the vertices in $S$. Thus
\[\tr \mathbf{L} (G /\pi) = k - \sum_{j = 1}^k \frac{e(S_j,S_j)}{\Vol (S_j)}.\]
Using interlacing, we reach an interesting lower bound on what any attempt of colouring a graph with $k$ colours can yield.
\begin{corollary} \label{cor:1}
For any $k$-partition $\pi = \{S_1,...,S_k\}$ of a graph $G$, it follows that
\[\sum_{j = 1}^k \frac{e(S_j,S_j)}{\Vol(S_j)} \geq k - \sigma_{k-1}.\] \qed
\end{corollary}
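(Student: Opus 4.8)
The plan is to run the same interlacing argument that proves Theorem~\ref{thm:bound}, but now for an \emph{arbitrary} $k$-partition rather than a proper colouring, and to read off the consequence for $\tr \mathbf{L}(G/\pi)$. First I would observe that the construction $\mathbf{P} = \mathbf{D}^{1/2}\mathbf{S}\mathbf{N}^{-1/2}$ together with the identities \eqref{eq:PTP=I} and \eqref{eq:interlacing} never used that $\pi$ is proper: the computation of $\mathbf{P}^T\mathbf{L}\mathbf{P}$ relies only on $\mathbf{S}^T\mathbf{D}\mathbf{S} = \mathbf{N}$ and $\mathbf{S}^T\mathbf{A}\mathbf{S} = \mathbf{A}(G/\pi)$, both of which hold for any partition. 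Hence $\mathbf{P}^T\mathbf{P} = \mathbf{I}$ and $\mathbf{P}^T\mathbf{L}\mathbf{P} = \mathbf{L}(G/\pi)$ persist; the only change is that $\mathbf{A}(G/\pi)$ may now carry a nonzero diagonal coming from the internal edges (loops), which turns out to be harmless.

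Next I would invoke Theorem~\ref{thm:interlacing}(i) with $\mathbf{M} = \mathbf{L}$ and $\mathbf{N} = \mathbf{L}(G/\pi)$. Writing $\mu_1 \geq \cdots \geq \mu_k$ for the eigenvalues of $\mathbf{L}(G/\pi)$, interlacing gives $\mu_i \leq \lambda_{n-i}$ for each $i$, and in particular $\mu_k \geq \lambda_0 = 0$, so $\mathbf{L}(G/\pi)$ is positive semidefinite. The remaining observation is that the value $0$ is actually attained: the vector $\mathbf{N}^{1/2}\mathbf{1}$ lies in the kernel of $\mathbf{L}(G/\pi)$, because the row sums of $\mathbf{S}^T\mathbf{A}\mathbf{S}$ are exactly the volumes $\Vol(S_j)$, i.e. $\mathbf{A}(G/\pi)\mathbf{1} = \mathbf{N}\mathbf{1}$. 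Combined with positive semidefiniteness, this forces the smallest eigenvalue to be $\mu_k = 0$.

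Finally I would assemble the pieces. Since $\mu_k = 0$,
\[ \tr \mathbf{L}(G/\pi) = \sum_{i=1}^{k} \mu_i = \sum_{i=1}^{k-1}\mu_i \;\leq\; \sum_{i=1}^{k-1}\lambda_{n-i} = \sigma_{k-1}, \]
where the inequality is the interlacing bound $\mu_i \leq \lambda_{n-i}$ summed over $i = 1, \dots, k-1$. Substituting the trace formula $\tr \mathbf{L}(G/\pi) = k - \sum_{j}\frac{e(S_j,S_j)}{\Vol(S_j)}$ recorded immediately before the statement, and rearranging, yields the claimed inequality.

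I expect the only genuinely delicate point to be the transfer of \eqref{eq:interlacing} and of the guaranteed $0$-eigenvalue to non-proper partitions; everything else is the bookkeeping already performed for Theorem~\ref{thm:bound}. Concretely, one must check that the loops appearing in $G/\pi$ disturb neither the matrix identity $\mathbf{P}^T\mathbf{L}\mathbf{P} = \mathbf{L}(G/\pi)$ nor the fact that $\mathbf{N}^{1/2}\mathbf{1}$ lies in the kernel, and that is where I would direct the write-up. The decisive trick --- discarding the eigenvalue $\mu_k = 0$ so that $\sigma_{k-1}$ suffices in place of $\sigma_k$ --- is exactly the one already used above, and as a sanity check the proper-colouring case $e(S_j,S_j)=0$ recovers the contrapositive of Theorem~\ref{thm:bound}.
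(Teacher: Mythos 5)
Your proof is correct and follows exactly the paper's (implicit) argument: the identities \eqref{eq:PTP=I} and \eqref{eq:interlacing} hold for an arbitrary $k$-partition, the smallest eigenvalue of $\mathbf{L}(G/\pi)$ is $0$ and can be discarded, and interlacing bounds the remaining $k-1$ eigenvalues by $\sigma_{k-1}$, so the trace formula recorded before the statement yields the claim. The only difference is expository: you explicitly verify the points the paper glosses over, namely that properness of $\pi$ is never used and that $\mathbf{N}^{1/2}\mathbf{1}$ remains in the kernel of the quotient matrix.
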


We have two remarks.

First, Theorem \ref{thm:bound} indicates that graphs with small chromatic number should have large normalized Laplacian eigenvalues. For instance, the well-known fact that bipartite graphs have largest eigenvalue equal to $2$ follows immediately from the result above.

Second, Theorem \ref{thm:bound} has been, \textit{mutatis mutandis}, already known for a long time. Interlacing has seen countless applications in graph theory. As far as we know, first versions of this technique to study the chromatic number appeared in Haemers \cite{HaemersEigenvalueMethods}, where interlacing for the adjacency matrix was used. In fact, Theorem \ref{thm:bound} is equivalent to \cite[Proposition 3.6.3 (i)]{BrouwerHaemers} for regular graphs. More recently, Bollobás and Nikiforov obtained similar results for the Laplacian matrix \cite{BollobasNikiforovInterlacing,BollobasNikiforovExact}. Finally, Butler used interlacing \cite[Chapter 5]{ButlerPhdThesis} for the normalized Laplacian and proved almost the same result (Theorem 36) as Corollary \ref{cor:1} above. The only, but yet relevant, difference is that we have our results for $\sigma_{k-1}$, and not $\sigma_k$. This seems a small difference, but note that the results are trivially true if stated with $\sigma_k$ instead.

Our main goal over the next sections is to try to provide some new useful insights about an old technique.

\section{Tight interlacing and equitable colourings} \label{sec:3}

Recall that $G$ is a graph with normalized Laplacian matrix $\bf L$, and we denote its eigenvalues by $0 = \lambda_0 \leq ... \leq \lambda_{n-1}$. We also defined $\sigma_k(G)$ as the sum of the $k$ largest eigenvalues of $\mathbf L (G)$, and made a similar definition for $G / \pi$ replacing $G$. Assume $\pi= \{S_1,...,S_k\}$ is a $k$-colouring with characteristic matrix $\bf S$. Again, $\bf D$ is the diagonal matrix recording the degrees of $G$ and $\mathbf N$ is the diagonal matrix recording the sum of the degrees of the vertices in each $S_j$. We now turn our attention to the case where interlacing (see Theorem \ref{thm:interlacing}) is tight.

\begin{lemma}\label{thm:tight}
Let $\pi$ be a $k$-colouring and $\mathbf P = \mathbf D^{1/2} \mathbf S\mathbf N^{-1/2}$. If $\sigma_{k-1} = k$, then interlacing of $\mathbf L = \mathbf L(G)$ according to $\bf P$ is tight, and in particular, $\mathbf L \mathbf P = \mathbf P \mathbf L(G / \pi)$.
\end{lemma}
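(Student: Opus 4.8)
The plan is to read the hypothesis $\sigma_{k-1} = k$ as forcing equality in a chain of interlacing inequalities, deduce termwise equality of the relevant eigenvalues, and then invoke part (iv) of Theorem \ref{thm:interlacing}. The whole argument is short; the real work is careful bookkeeping of indices.

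First I would set up the correspondence with Theorem \ref{thm:interlacing}. By \eqref{eq:PTP=I} and \eqref{eq:interlacing}, the matrix $\mathbf{M} = \mathbf{L}$ together with its compression $\mathbf{L}(G/\pi) = \mathbf{P}^T \mathbf{L}\mathbf{P}$ satisfies the hypotheses with $m = k$. Write the eigenvalues of $\mathbf{L}$ in decreasing order as $\theta_i = \lambda_{n-i}$ (so $\theta_n = \lambda_0 = 0$) and those of $\mathbf{L}(G/\pi)$ as $\mu_1 \geq \dots \geq \mu_k$. Two facts about the quotient are the engine of the proof: since $\pi$ is a proper colouring each diagonal entry $e(S_j,S_j)$ of $\aA(G/\pi)$ vanishes, so $\mathbf{L}(G/\pi)$ has all-ones diagonal and hence $\tr \mathbf{L}(G/\pi) = \sum_{i=1}^{k}\mu_i = k$; and, as already observed, $0$ is an eigenvalue of $\mathbf{L}(G/\pi)$, so $\mu_k = 0$ and therefore $\sum_{i=1}^{k-1}\mu_i = k$.

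Next I would combine these with interlacing. Part (i) of Theorem \ref{thm:interlacing} gives $\mu_i \le \theta_i$ for each $i$, and by definition $\sigma_{k-1} = \sum_{i=1}^{k-1}\theta_i$, so
\[
k = \sum_{i=1}^{k-1}\mu_i \le \sum_{i=1}^{k-1}\theta_i = \sigma_{k-1}.
\]
The hypothesis $\sigma_{k-1} = k$ collapses this into an equality of the two sums; since each summand obeys $\mu_i \le \theta_i$, equality of the sums forces $\mu_i = \theta_i$ for every $i = 1,\dots,k-1$.

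Finally I would verify that the interlacing is tight in the exact sense of part (iv), with breakpoint $k' = k-1$ (written $k'$ to avoid clashing with the number of colours). For $1 \le i \le k-1$ the equality $\mu_i = \theta_i$ just established is the upper half of the tightness pattern. For the single remaining index $i = k$ the relevant quantity is the \emph{lower} interlacing bound $\theta_{n-m+i} = \theta_{n-k+k} = \theta_n = \lambda_0 = 0$, and indeed $\mu_k = 0$, so $\mu_k = \theta_{n-m+k}$. Hence the interlacing is tight, and part (iv) delivers $\mathbf{L}\mathbf{P} = \mathbf{P}\,\mathbf{L}(G/\pi)$. The only delicate point is this last step: one must pair the boundary index $i = k$ with the lower bound $\theta_{n-m+i}$ rather than with $\theta_i$, which is precisely what lets the zero eigenvalue of the quotient slot into the tightness pattern and makes $k' = k-1$ the correct breakpoint.
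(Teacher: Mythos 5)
Your proof is correct and follows essentially the same route as the paper's: both use that $\operatorname{tr} \mathbf{L}(G/\pi) = k$ and that the zero eigenvalue of the quotient can be discarded, so that $\sigma_{k-1}(G) = k$ forces termwise equality $\mu_i = \theta_i$ for $i \le k-1$, with the last index pairing against $\theta_n = 0$, whence tightness and Theorem \ref{thm:interlacing}(iv) give $\mathbf{L}\mathbf{P} = \mathbf{P}\mathbf{L}(G/\pi)$. If anything, your verification that the boundary index $i = k$ fits the tightness pattern via the lower bound $\theta_{n-m+k}$ is spelled out more explicitly than in the paper, which simply asserts tightness at that point.
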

\begin{proof}
If $\sigma_{k-1}(G) = k$, then
\[k = \sigma_{k-1}(G) \geq \sigma_{k-1}(G / \pi) = \tr \mathbf L(G /\pi) = k.\]
So the inequality holds with equality, and therefore each eigenvalue of $\mathbf L(G /\pi)$ must meet the upper bound given by interlacing, except for the last. Hence interlacing is tight, and from Theorem \ref{thm:interlacing} item (iv), the equality $\mathbf L \mathbf P = \mathbf P \mathbf L(G / \pi)$ holds.
\end{proof}

Consider a partition $\pi = \{S_1,...,S_m\}$ of the set $[n]$. Assume this partition has characteristic matrix $\qQ$ (which is $n \times m$). Let $\bf M$ be a symmetric $n \times n$ matrix. The partition $\pi$ is called \textit{equitable} to $\bf M$ if
\[\sum_{k \in S_j} \mathbf M_{ik}\]
is constant for all $i$ which belongs to the same class of $\pi$, for all $j$, or, equivalently, if the column space of $Q$ is $M$-invariant. If $\bf M = \bf A$, then to say that $\pi$ is equitable to $\bf A$ is equivalent to saying that the number of neighbours a vertex in class $i$ has in class $j$ depends only on $i$ and $j$.

The theorem below is an adaptation of the theory of equitable partitions for adjacency matrices \cite[Chapter 9]{GodsilRoyle} to the normalized Laplacians.
\begin{lemma}\label{thm:equitable}
Let $\pi$ be a partition of $V(G)$ with characteristic matrix $\mathbf S$, and let $\mathbf P = \mathbf D^{1/2} \mathbf S \mathbf N^{-1/2}$. Then $\pi$ is equitable to $\mathbf{D}^{-1} \mathbf A$ if and only if $\mathbf L(G) \mathbf P = \mathbf P \mathbf L(G/\pi)$.
\end{lemma}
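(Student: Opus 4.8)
The plan is to reduce the operator identity $\mathbf L(G)\mathbf P = \mathbf P\mathbf L(G/\pi)$ to a statement purely about $\mathbf S$ and the transition-type matrix $\mathbf D^{-1}\mathbf A$, and then to recognise that statement as the definition of equitability. First I would substitute $\mathbf L(G) = \mathbf I - \mathbf D^{-1/2}\mathbf A\mathbf D^{-1/2}$, the expression $\mathbf L(G/\pi) = \mathbf I - \mathbf N^{-1/2}\mathbf A(G/\pi)\mathbf N^{-1/2}$ coming from \eqref{eq:interlacing}, and $\mathbf P = \mathbf D^{1/2}\mathbf S\mathbf N^{-1/2}$ into both sides. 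The leading terms $\mathbf D^{1/2}\mathbf S\mathbf N^{-1/2}$ produced by the two identity blocks cancel, and after multiplying on the left by $\mathbf D^{-1/2}$ and on the right by $\mathbf N^{1/2}$ the identity collapses to
\[\mathbf D^{-1}\mathbf A\,\mathbf S = \mathbf S\,\mathbf N^{-1}\mathbf A(G/\pi).\]
Thus the commuting condition is equivalent to this single matrix equation, and the remaining work is to see that this equation is equivalent to equitability of $\pi$ with respect to $\mathbf D^{-1}\mathbf A$.

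Two elementary identities drive the rest. Since $\mathbf D$ is diagonal and $\mathbf S$ is a characteristic matrix, $\mathbf S^T\mathbf D\mathbf S = \mathbf N$, because its $j$th diagonal entry is exactly $\Vol(S_j)$; consequently the degree-weighted map $\mathbf N^{-1}\mathbf S^T\mathbf D$ is a left inverse of $\mathbf S$. I would also use $\mathbf D\,(\mathbf D^{-1}\mathbf A) = \mathbf A$ together with $\mathbf A(G/\pi) = \mathbf S^T\mathbf A\mathbf S$. These let me pin down, for any equitable $\pi$, the resulting quotient matrix explicitly.

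For the reverse direction, the displayed equation directly exhibits a matrix $\mathbf C = \mathbf N^{-1}\mathbf A(G/\pi)$ with $\mathbf D^{-1}\mathbf A\,\mathbf S = \mathbf S\mathbf C$, which is precisely the assertion that the column space of $\mathbf S$ is $\mathbf D^{-1}\mathbf A$-invariant, i.e. that $\pi$ is equitable to $\mathbf D^{-1}\mathbf A$. For the forward direction, equitability supplies some $\mathbf C$ with $\mathbf D^{-1}\mathbf A\,\mathbf S = \mathbf S\mathbf C$; since $\mathbf S$ has full column rank this $\mathbf C$ is unique, and applying the left inverse $\mathbf N^{-1}\mathbf S^T\mathbf D$ gives $\mathbf C = \mathbf N^{-1}\mathbf S^T\mathbf D\mathbf D^{-1}\mathbf A\mathbf S = \mathbf N^{-1}\mathbf S^T\mathbf A\mathbf S = \mathbf N^{-1}\mathbf A(G/\pi)$, which reproduces exactly the collapsed identity above.

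The step I expect to require the most care is not any single computation but getting the weighting right: because $\mathbf D^{-1}\mathbf A$ is not symmetric, the natural projection onto the column space of $\mathbf S$ is the degree-weighted left inverse $\mathbf N^{-1}\mathbf S^T\mathbf D$ rather than the unweighted $(\mathbf S^T\mathbf S)^{-1}\mathbf S^T$, and it is precisely this weighting that forces the quotient matrix to emerge as $\mathbf N^{-1}\mathbf A(G/\pi)$ and line up with $\mathbf L(G/\pi)$. Keeping the $\mathbf D^{\pm 1/2}$ and $\mathbf N^{\pm 1/2}$ factors in their correct positions throughout the reduction is the only genuine bookkeeping hazard.
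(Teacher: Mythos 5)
Your proof is correct, and its opening step --- collapsing $\mathbf L(G)\mathbf P = \mathbf P\mathbf L(G/\pi)$ to the single equation $\mathbf D^{-1}\mathbf A\,\mathbf S = \mathbf S\,\mathbf N^{-1}\mathbf A(G/\pi)$ --- is exactly the paper's opening move. Where you diverge is in how you connect that equation to equitability. The paper works entry by entry: for the direction ``commuting implies equitable'' it reads off $(\mathbf D^{-1}\mathbf A\mathbf S)_{aj} = e(a,S_j)/d(a)$ and $(\mathbf S\mathbf N^{-1}\mathbf A(G/\pi))_{aj} = e(S_i,S_j)/\Vol(S_i)$ for $a \in S_i$, and for the converse it runs a proportionality (mediant) argument, writing $e(a_k,S_j) = Aq_k$ and $d(a_k) = Bq_k$ and summing over the class to show that the common vertex ratio equals the class ratio $e(S_i,S_j)/\Vol(S_i)$. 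You instead invoke the subspace formulation of equitability (the column space of $\mathbf S$ is $\mathbf D^{-1}\mathbf A$-invariant, which the paper's definition explicitly admits as an equivalent), and then argue by pure linear algebra: full column rank of $\mathbf S$ makes the matrix $\mathbf C$ with $\mathbf D^{-1}\mathbf A\mathbf S = \mathbf S\mathbf C$ unique, and the degree-weighted left inverse $\mathbf N^{-1}\mathbf S^T\mathbf D$ (valid since $\mathbf S^T\mathbf D\mathbf S = \mathbf N$) together with $\mathbf S^T\mathbf A\mathbf S = \mathbf A(G/\pi)$ pins down $\mathbf C = \mathbf N^{-1}\mathbf A(G/\pi)$. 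Your route is coordinate-free and explains in one sweep \emph{why} the quotient matrix has to be $\mathbf N^{-1}\mathbf A(G/\pi)$ --- it is the image of $\mathbf D^{-1}\mathbf A$ under the degree-weighted projection onto the column space of $\mathbf S$ --- and it would adapt unchanged to other diagonal weightings; the paper's route is more elementary and keeps the combinatorial content visible, namely that equitability for $\mathbf D^{-1}\mathbf A$ means every vertex of $S_i$ sends to $S_j$ the same fraction of its degree, and that fraction is forced to be $e(S_i,S_j)/\Vol(S_i)$. Both are complete proofs of the stated equivalence.
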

\begin{proof}
We use $e(S,T)$ to denote the number of edges between sets of vertices $S$ and $T$. If $S = \{a\}$, we might simply write $e(a,T)$. First we shall note that $\mathbf L(G) \mathbf P = \mathbf P \mathbf L(G/\pi)$ is equivalent to 
\[ \mathbf{D}^{-1} \mathbf A(G) \mathbf S = \mathbf S \mathbf N^{-1} \mathbf A(G/\pi).\]
If this equality holds, and if $a$ is a vertex that belongs to class $S_i$, then
\[\frac{e(a,S_j)}{d(a)} = (\mathbf{D}^{-1} \mathbf A(G) \mathbf S )_{aj} = (\mathbf S \mathbf N^{-1} \mathbf A(G/\pi))_{aj} = \frac{e(S_i,S_j)}{\Vol(S_i)}.\]
So the partition is equitable. On the other hand, if the partition is equitable, then fix $S_i$, say $S_i = \{a_1,...,a_f\}$, and for all $S_j$, it follows that for all $a_k,a_\ell \in S_i$, we have
\[\frac{e(a_k,S_j)}{d(a_k)} = \frac{e(a_\ell,S_j)}{d(a_\ell)}.\]
So there are constants $A,B$ and $q_1,...,q_f$ such that $e(a_k,S_j) = A q_k$ and $d(a_k) = Bq_k$. Then it is immediate to check that for all $a \in S_i$,
\[\frac{e(S_i,S_j)}{\Vol(S_i)} = \frac{A \sum q_k}{ B \sum q_k} = \frac{A}{B} = \frac{e(a,S_j)}{d(a)},\]
thus $\mathbf L(G) \mathbf P = \mathbf P \mathbf L(G/\pi)$.
\end{proof}

Using Theorem 1 from \cite{NikiforovChromaticSpectralRadius}, the following lemma is straightforward. We provide a proof that follows immediately from Theorem \ref{thm:bound}.
\begin{lemma}\label{lem:largesteval}
	Let $\lambda = \lambda_{n-1}$ be the largest eigenvalue of the normalized Laplacian matrix $\bf L$ of a graph $G$. Then
	\[\chi \geq 1 + \frac{1}{\lambda - 1}.\]
\end{lemma}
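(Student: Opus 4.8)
The plan is to derive the bound directly from Theorem~\ref{thm:bound} by contrapositive. Theorem~\ref{thm:bound} says that if $\sigma_{k-1} < k$ then $k < \chi$, equivalently $\chi \geq k+1$ whenever $\sigma_{k-1} < k$. So I would like to find the largest integer $k$ for which I can guarantee $\sigma_{k-1} < k$, and then conclude $\chi \geq k+1$. The quantity $\sigma_{k-1}$ is the sum of the $k-1$ largest eigenvalues of $\mathbf L$, and the crudest useful upper bound on it is obtained by replacing every one of those eigenvalues by the largest, namely $\lambda = \lambda_{n-1}$, giving $\sigma_{k-1} \leq (k-1)\lambda$.

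First I would write down the inequality $\sigma_{k-1} \leq (k-1)\lambda$ and ask when this forces $\sigma_{k-1} < k$. It suffices to have $(k-1)\lambda < k$, i.e. $(k-1)\lambda < k$. Solving this for $k$: rearranging gives $k\lambda - \lambda < k$, so $k(\lambda - 1) < \lambda$, and hence (since $\lambda \geq 1$ because the eigenvalues of the normalized Laplacian sum to $n$ with average $1$ and the largest is at least the average, with $\lambda = 1$ only in degenerate cases I can treat separately) we get $k < \frac{\lambda}{\lambda - 1} = 1 + \frac{1}{\lambda - 1}$.

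Next, I would turn this into a statement about $\chi$. The condition $(k-1)\lambda < k$ guarantees $\sigma_{k-1} < k$, which by Theorem~\ref{thm:bound} yields $\chi > k$. So I choose $k$ to be the largest integer strictly less than $1 + \frac{1}{\lambda-1}$; for every such $k$ the hypothesis holds, and I obtain $\chi \geq k+1$. Taking $k$ as large as the strict inequality permits and then passing to the integer bound gives exactly $\chi \geq 1 + \frac{1}{\lambda - 1}$, after checking that the ceiling/floor bookkeeping lines up. Concretely, if $1 + \frac{1}{\lambda-1}$ is not an integer this is immediate, and if it happens to be an integer one verifies the boundary case by hand.

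The main obstacle I anticipate is the boundary bookkeeping between the strict inequality in Theorem~\ref{thm:bound} and the non-strict inequality $\chi \geq 1 + \frac{1}{\lambda-1}$ in the claim, together with handling the edge case $\lambda = 1$ (which would make $\frac{1}{\lambda-1}$ undefined, but corresponds to $G$ having no edges, excluded since $G$ has no isolated vertices, so in fact $\lambda > 1$ whenever $G$ has an edge). I would dispatch the $\lambda > 1$ point first, then run the counting argument above, being careful that choosing $k = \left\lceil 1 + \frac{1}{\lambda-1} \right\rceil - 1$ (or the appropriate floor) produces the stated bound without off-by-one errors.
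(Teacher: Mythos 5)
Your proof is correct and rests on exactly the same two ingredients as the paper's: Theorem \ref{thm:bound} and the crude bound $\sigma_{k-1} \leq (k-1)\lambda$. The paper merely organizes the algebra more efficiently—taking $k = \chi$ in the contrapositive of Theorem \ref{thm:bound} gives $\chi \leq \sigma_{\chi-1} \leq (\chi-1)\lambda$ at once, which rearranges to the claim and avoids all floor/ceiling bookkeeping (which, incidentally, you could also skip: maximality of your chosen $k$ already forces $k+1 \geq 1 + \frac{1}{\lambda-1}$, so no integer/non-integer case analysis is needed).
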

\begin{proof}
    If $k = \chi$,  then $\chi \le \sigma_{\chi-1}(G) \le (\chi - 1)\lambda$, which implies the result.
\end{proof}

We can now provide a significant consequence for the case where equality in the lemma above holds.

\begin{theorem}\label{thm:3}
	Let $G$ be a graph which admits a colouring with $k$ colours, and let $\lambda = \lambda_{n-1}$ be its largest normalized Laplacian eigenvalue. If
	\[\lambda = \frac{k}{k-1},\]
	then this colouring is equitable with respect to $\dD^{-1} \aA$. 
\end{theorem}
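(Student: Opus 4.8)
The plan is to chain together the results already established in the excerpt so that the hypothesis $\lambda = k/(k-1)$ forces the interlacing to be tight, and then invoke Lemma \ref{thm:equitable} to conclude equitability. First I would fix a $k$-colouring $\pi = \{S_1,\dots,S_k\}$ with characteristic matrix $\mathbf S$ and form $\mathbf P = \mathbf D^{1/2}\mathbf S\mathbf N^{-1/2}$ as throughout. The key observation is that the hypothesis is precisely the equality case of Lemma \ref{lem:largesteval}: that lemma shows $\chi \ge 1 + 1/(\lambda-1)$ via the chain $k \le \sigma_{k-1}(G) \le (k-1)\lambda$, and substituting $\lambda = k/(k-1)$ gives $(k-1)\lambda = k$, which pins the whole chain into equalities, most importantly $\sigma_{k-1}(G) = k$.

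Once I have $\sigma_{k-1}(G) = k$, the heavy lifting is done by the results already in place. Lemma \ref{thm:tight} applies verbatim: with $\sigma_{k-1} = k$, interlacing of $\mathbf L = \mathbf L(G)$ according to $\mathbf P$ is tight, and in particular $\mathbf L\mathbf P = \mathbf P\mathbf L(G/\pi)$. Then Lemma \ref{thm:equitable} says that this matrix identity is equivalent to $\pi$ being equitable with respect to $\mathbf D^{-1}\mathbf A$, which is exactly the conclusion we want. So the proof is essentially an assembly: hypothesis $\Rightarrow$ tight chain in Lemma \ref{lem:largesteval} $\Rightarrow$ $\sigma_{k-1}(G) = k$ $\Rightarrow$ Lemma \ref{thm:tight} gives the commuting relation $\Rightarrow$ Lemma \ref{thm:equitable} gives equitability.

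The one point that needs a little care, and which I expect to be the main (though minor) obstacle, is justifying that the hypothesis genuinely yields $\sigma_{k-1}(G) = k$ rather than merely $\sigma_{k-1}(G) \le k$. The bound $\sigma_{k-1}(G) \le (k-1)\lambda$ is immediate since each of the $k-1$ summands is at most the largest eigenvalue $\lambda$; combined with $\sigma_{k-1}(G) \ge k$ (which holds because $G$ admits a $k$-colouring, via Corollary \ref{cor:1} or equivalently the interlacing argument of Theorem \ref{thm:bound}) and the hypothesis $(k-1)\lambda = k$, we are squeezed into $\sigma_{k-1}(G) = k$. I would make sure to state the lower bound $\sigma_{k-1}(G) \ge k$ explicitly, since it relies on the existence of the $k$-colouring and is what rules out the degenerate possibility that the sum of the top $k-1$ eigenvalues falls short. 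With that squeeze in hand, the remaining steps are direct citations of Lemma \ref{thm:tight} and Lemma \ref{thm:equitable}, and no further computation is required.
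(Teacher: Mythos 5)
Your proposal is correct and follows essentially the same route as the paper: the hypothesis $\lambda = k/(k-1)$ gives the squeeze $k = (k-1)\lambda \ge \sigma_{k-1}(G) \ge \sigma_{k-1}(G/\pi) = k$, after which Lemma \ref{thm:tight} yields $\mathbf{L}\mathbf{P} = \mathbf{P}\mathbf{L}(G/\pi)$ and Lemma \ref{thm:equitable} gives equitability. The point you flag as needing care --- that the existence of the $k$-colouring supplies the lower bound $\sigma_{k-1}(G) \ge k$ --- is exactly the inequality $\sigma_{k-1}(G) \ge \sigma_{k-1}(G/\pi) = k$ in the paper's own chain, so your write-up is a slightly more explicit version of the same argument.
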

\begin{proof}
	If $\pi$ is a $k$-colouring, we have 
	$k = (k-1)\lambda \ge \sigma_{k-1}(G) \ge \sigma_{k-1}(G/\pi) = k$.
	
	In this case, $\sigma_{k-1}(G) = k$, and by Lemma \ref{thm:tight} we have a tight interlacing according to $\pi$, resulting in the equality $\mathbf{LP} = \mathbf{PL}(G/\pi)$. Therefore, by Lemma \ref{thm:equitable}, $\pi$ must be equitable with respect to $\mathbf{D}^{-1}\mathbf{A}$.
\end{proof}

 \section{Regular colourings} \label{sec:4}

We say that a colouring of a graph $G$ is \textit{regular} if between any two colour classes there is the same number of edges.  For a partition $\pi = \{S_1, \dots, S_m\}$ of $V(G)$, define
\begin{enumerate}[(i)]
\item $\displaystyle \gamma(\pi) = \frac{\min_{i \neq j} e(S_i, S_j)}{\max_i \Vol S_i},$
\item $\displaystyle \gamma^*(\pi) = \frac{\max_{i \neq j} e(S_i, S_j)}{\min_i \Vol S_i}.$
\end{enumerate}
Moreover, for reasons that will become clear quite soon, define
\begin{enumerate}[(i)]
\item[(iii)] $\displaystyle \psi_k(G) = \max_{\pi, \text{$k$-partition}} \gamma(\pi)$,
\item[(iv)]  $\displaystyle \phi_k(G) = \min_{\pi, \text{$k$-partition}} \gamma^*(\pi)$.
\end{enumerate}

\begin{lemma}\label{lem:regcol}
	If $\pi$ is a $k$-partition of $V(G)$, then 
	\[\gamma(\pi) \le \frac{1}{k-1},\]
	with equality if and only if $\pi$ is a regular coloring.
\end{lemma}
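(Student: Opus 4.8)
The plan is to reduce the statement to a single degree-counting identity. Write $m = \min_{i \neq j} e(S_i, S_j)$, and start from the observation that the degree of each vertex splits according to which class its neighbours lie in. Summing over a class gives the identity
\[\Vol(S_i) = 2\, e(S_i, S_i) + \sum_{j \neq i} e(S_i, S_j),\]
valid for every $i$, where the factor $2$ appears because each internal edge contributes to two vertices of $S_i$. Since the sum on the right has $k-1$ terms, each at least $m$, and $2\,e(S_i,S_i) \ge 0$, I obtain $\Vol(S_i) \ge (k-1)m$ for every $i$, and taking the maximum over $i$ yields $\max_i \Vol(S_i) \ge (k-1)m$. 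If $m = 0$ the claimed inequality is immediate; if $m > 0$, dividing gives $\gamma(\pi) = m/\max_i \Vol(S_i) \le 1/(k-1)$. Here I use that $G$ has no isolated vertices, so each $\Vol(S_i) > 0$ and the quotient is well defined.

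For the equality case, suppose $\gamma(\pi) = 1/(k-1)$; then $m > 0$ and $\max_i \Vol(S_i) = (k-1)m$. The key step is that the bound $\Vol(S_i) \ge (k-1)m$ holds for \emph{every} class, so the maximum equalling $(k-1)m$ forces $\Vol(S_i) = (k-1)m$ simultaneously for all $i$. Feeding this back into the identity, the chain $(k-1)m = 2\,e(S_i,S_i) + \sum_{j\neq i} e(S_i,S_j) \ge 0 + (k-1)m$ must be an equality throughout. This forces $e(S_i,S_i) = 0$ (so every class is independent and $\pi$ is a proper colouring) and $e(S_i, S_j) = m$ for every $j \neq i$, since a sum of $k-1$ terms each at least $m$ equals $(k-1)m$ only when every term equals $m$. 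Thus $e(S_i,S_j)$ is the constant $m$ across all pairs, i.e.\ $\pi$ is a regular colouring.

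Conversely, if $\pi$ is a regular colouring with common value $e(S_i,S_j) = c$ for $i \neq j$, then independence gives $e(S_i,S_i)=0$, so the identity collapses to $\Vol(S_i) = (k-1)c$ for every $i$; hence $\max_i \Vol(S_i) = (k-1)c$ and $m = c$, giving $\gamma(\pi) = c/((k-1)c) = 1/(k-1)$. The only delicate point I anticipate is the bookkeeping in the equality direction: one must argue that the single scalar equation $\max_i \Vol(S_i) = (k-1)m$ propagates to every class and every pair, and the cleanest way to secure this is precisely the observation that the per-class lower bound is uniform in $i$, so no class can overshoot $(k-1)m$ without pushing the maximum above it.
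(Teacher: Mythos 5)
Your proof is correct, but it takes a different route from the paper's. The paper argues globally by averaging: it bounds $\min_{i\neq j} e(S_i,S_j)$ above by the average cross-pair count $\frac{1}{\binom{k}{2}}\bigl(\frac{\Vol G}{2}\bigr)$ and bounds $\max_i \Vol S_i$ below by the average volume $\frac{1}{k}\Vol G$, then observes that equality in the first inequality (edges equidistributed among pairs, no internal edges) forces equality in the second. You instead work locally, class by class, via the degree-sum identity $\Vol(S_i) = 2e(S_i,S_i) + \sum_{j\neq i} e(S_i,S_j) \ge (k-1)m$, which yields the same key fact $\max_i \Vol(S_i) \ge (k-1)m$ without passing through any global quantity. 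The payoff of your decomposition shows in the equality analysis: since your lower bound holds uniformly in $i$, equality immediately pins down $\Vol(S_i) = (k-1)m$ for \emph{every} class, kills every internal edge, and forces every cross count to equal $m$ — so both directions of the ``if and only if'' come out explicitly, including the fact that equality forces the partition to be a proper colouring in the first place. The paper's equality discussion is considerably terser (essentially one sentence) and leaves the direction ``equality $\Rightarrow$ regular'' implicit in the reader's unwinding of when the averaging inequalities are tight; your version also records the slightly stronger byproduct that all classes must have equal volume. What the paper's approach buys in exchange is brevity and a template that transfers verbatim to the companion bound for $\gamma^*(\pi)$ in Lemma \ref{lem:regcol2}, where the same min-versus-average and max-versus-average comparisons are reused with the inequalities reversed; your identity-based argument adapts to that case too, but would need to be rewritten with the roles of $\min$ and $\max$ swapped.
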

\begin{proof}
	\begin{align*}
	\gamma(\pi) &= \frac{\min_{i \neq j} e(S_i, S_j)}{\max_i \Vol S_i} \leq \frac{\frac{1}{\binom{k}{2}}\left(\frac{\Vol G}{2}\right)}{\max_i \Vol S_i}\\
	&\le \frac{\frac{1}{\binom{k}{2}}\left(\frac{\Vol G}{2}\right)}{\frac{1}{k} \Vol G} = \frac{1}{k-1}.
	\end{align*}
	The first inequality follows from assuming that all edges of the graph are equally distributed between the classes, and if this holds with equality, then the second inequality (which is immediate) also holds with equality.
\end{proof}

\begin{lemma}\label{lem:regcol2}
	If $\pi$ is a $k$-colouring of $G$, then
	\[\gamma^*(\pi) \geq \frac{1}{k-1},\]
	with equality if and only if $\pi$ is a regular colouring.
\end{lemma}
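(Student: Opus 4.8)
The plan is to mirror the proof of Lemma \ref{lem:regcol}, simply reversing the direction of both inequalities. Since $\pi$ is a proper colouring, every edge of $G$ joins two distinct classes, so the total number of edges is $e(G) = \tfrac{1}{2}\Vol G$, and these edges are distributed among the $\binom{k}{2}$ pairs of classes. The maximum of a finite list is at least its average, so
\[\max_{i \neq j} e(S_i, S_j) \ge \frac{1}{\binom{k}{2}}\left(\frac{\Vol G}{2}\right),\]
while the minimum of the volumes is at most their average,
\[\min_i \Vol S_i \le \frac{1}{k}\Vol G.\]

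First I would combine these two estimates. Dividing the first display by the second (both quantities are positive, as $G$ has no isolated vertices), I obtain
\[\gamma^*(\pi) = \frac{\max_{i \neq j} e(S_i, S_j)}{\min_i \Vol S_i} \ge \frac{\frac{1}{\binom{k}{2}}\left(\frac{\Vol G}{2}\right)}{\frac{1}{k}\Vol G} = \frac{1}{k-1},\]
which is exactly the claimed bound. The reversal of both inequalities relative to Lemma \ref{lem:regcol} is harmless precisely because it flips numerator and denominator in opposite senses, so the resulting bound on the quotient is identical.

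For the equality case, note that $\gamma^*(\pi) = \tfrac{1}{k-1}$ forces both of the inequalities above to be tight. The first is tight precisely when every $e(S_i, S_j)$ equals the common average, that is, when $\pi$ is a regular colouring; this already yields the forward implication. Conversely, if $\pi$ is regular with $e(S_i, S_j) = c$ for all $i \neq j$, then, because the colouring is proper, $\Vol S_i = \sum_{j \neq i} e(S_i, S_j) = (k-1)c$ is independent of $i$, so all the volumes coincide and the second inequality is tight as well. The main point to verify is this last observation: regularity of the colouring automatically equalizes the volumes, so the two inequalities become tight simultaneously exactly on regular colourings, and no separate hypothesis on the degree sequence is required.
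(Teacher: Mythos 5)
Your proof is correct and is essentially the argument the paper intends: the paper's own proof just says "very similar to Lemma \ref{lem:regcol}," and you have carried out exactly that mirrored argument (max $\ge$ average for the edge counts, min $\le$ average for the volumes), including the correct observation that for a proper colouring $\Vol S_i = \sum_{j \neq i} e(S_i,S_j)$, so regularity forces all volumes equal and both inequalities become tight simultaneously.
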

\begin{proof}
	The proof is very similar to that of Lemma \ref{lem:regcol}.
\end{proof}

It is fair to say that the larger $\gamma(\pi)$ is, the better $\pi$ is as an attempt to regularly colour $G$. Likewise, the smaller $\gamma^*(\pi)$ is, the more $\pi$ looks like a clustering with clusters of roughly the same number of edges. These comments motivate the definitions of $\psi_k(G)$ and $\phi_k(G)$. Note that $\phi_2(G)$ is indeed the so called Cheeger constant of $G$.

\begin{lemma} \label{lem:inequalities}
	Let $\pi$ be a $k$-partition of $V(G)$, $G/\pi$ be the quotient graph, and $0 = \theta_0 \leq \theta_1 \leq ... \leq \theta_{k-1}$ be the eigenvalues of $\lL = \lL(G/\pi)$. Then
	\[k\gamma(\pi) \leq \theta_1 \quad \text{and} \quad  \theta_{k-1} \leq k\gamma^*(\pi).\] 
\end{lemma}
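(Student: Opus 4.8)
The plan is to characterize both $\theta_1$ and $\theta_{k-1}$ through the Courant--Fischer variational principle applied to $\lL(G/\pi) = \mathbf{I} - \mathbf{N}^{-1/2}\mathbf{A}(G/\pi)\mathbf{N}^{-1/2}$, and then reduce each inequality to an elementary estimate on a weighted Laplacian quadratic form. Writing an arbitrary vector as $\mathbf{x} = \mathbf{N}^{1/2}\mathbf{y}$ (legitimate since $G$ has no isolated vertices, so $\mathbf{N}$ is invertible), I would first record the identity
\[\mathbf{x}^T \lL(G/\pi)\, \mathbf{x} = \sum_i \Vol(S_i)\,y_i^2 - \mathbf{y}^T\mathbf{A}(G/\pi)\,\mathbf{y} = \tfrac12\sum_{i\neq j} e(S_i,S_j)(y_i - y_j)^2,\]
the last equality being the standard weighted-Laplacian rewriting, where $\Vol(S_i) = 2e(S_i,S_i) + \sum_{j\neq i} e(S_i,S_j)$ absorbs the diagonal (loop) entries of $\mathbf{A}(G/\pi)$. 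Since $\mathbf{x}^T\mathbf{x} = \sum_i\Vol(S_i)\,y_i^2$ and the $\theta_0 = 0$ eigenvector $\mathbf{N}^{1/2}\mathbf{1}$ corresponds to $\mathbf{y} = \mathbf{1}$, the Rayleigh quotients for $\theta_1$ (minimizing over $\mathbf{y}$ with $\sum_i\Vol(S_i)\,y_i = 0$) and for $\theta_{k-1}$ (maximizing over all $\mathbf{y}$) both take the form $\tfrac12\sum_{i\neq j}e(S_i,S_j)(y_i-y_j)^2 \big/ \sum_i\Vol(S_i)\,y_i^2$. I will use throughout the elementary identity $\tfrac12\sum_{i\neq j}(y_i-y_j)^2 = k\sum_i y_i^2 - (\sum_i y_i)^2$.

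The bound $\theta_{k-1}\le k\gamma^*(\pi)$ is the easy half. In the numerator I replace each $e(S_i,S_j)$ by $\max_{i\neq j}e(S_i,S_j)$ and discard the nonnegative term $(\sum_i y_i)^2$, giving a numerator at most $k\,(\max_{i\neq j}e(S_i,S_j))\sum_i y_i^2$; in the denominator I bound each $\Vol(S_i)$ below by $\min_i\Vol(S_i)$. The quotient is then at most $k\,\max_{i\neq j}e(S_i,S_j)/\min_i\Vol(S_i) = k\gamma^*(\pi)$ for every $\mathbf{y}$, hence so is the maximum $\theta_{k-1}$.

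For $\theta_1\ge k\gamma(\pi)$ the same replacements point the wrong way, and this is where the real work lies. Bounding $e(S_i,S_j)\ge m := \min_{i\neq j}e(S_i,S_j)$ gives a numerator at least $m\big(k\sum_i y_i^2 - (\sum_i y_i)^2\big)$, so with $V := \max_i\Vol(S_i)$ it suffices to prove
\[k\sum_i \big(V - \Vol(S_i)\big)\,y_i^2 \;\ge\; V\Big(\sum_i y_i\Big)^2 \qquad\text{whenever}\qquad \sum_i \Vol(S_i)\,y_i = 0.\]
The obstacle is precisely the $(\sum_i y_i)^2$ term: the orthogonality constraint is the \emph{weighted} one $\sum_i\Vol(S_i)\,y_i=0$ rather than $\sum_i y_i = 0$, so this term need not vanish (and a crude bound on the denominator loses exactly the factor one would need). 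The resolution I expect to work is to feed the constraint back in, rewriting $\sum_i y_i = \tfrac1V\sum_i (V - \Vol(S_i))\,y_i$, and then applying Cauchy--Schwarz to the weights $\sqrt{V-\Vol(S_i)}$:
\[V\Big(\sum_i y_i\Big)^2 = \frac1V\Big(\sum_i \big(V-\Vol(S_i)\big)y_i\Big)^2 \le \frac{\sum_i\big(V-\Vol(S_i)\big)}{V}\sum_i\big(V-\Vol(S_i)\big)y_i^2.\]
Since $\sum_i(V-\Vol(S_i)) = kV - \Vol(G) \le kV$, the prefactor is at most $k$, which is exactly the displayed inequality; chasing it back through the Rayleigh quotient yields $\theta_1\ge km/V = k\gamma(\pi)$.

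Finally I would observe that both estimates are simultaneously tight for a regular colouring with equal volumes, where $\lL(G/\pi) = \tfrac{kc}{v}\big(\mathbf{I} - \tfrac1k\mathbf{J}\big)$ has all nonzero eigenvalues equal to $k\gamma(\pi) = k\gamma^*(\pi)$, consistent with Lemmas \ref{lem:regcol} and \ref{lem:regcol2} and confirming these are the natural bounds.
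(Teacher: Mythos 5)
Your proof is correct, and it shares the paper's overall skeleton: both arguments express $\theta_1$ and $\theta_{k-1}$ via Courant--Fischer applied to $\lL(G/\pi)$ after the substitution $\mathbf{x} = \mathbf{N}^{1/2}\mathbf{y}$, and both then replace $e(S_i,S_j)$ and $\Vol(S_i)$ by their extreme values. The two proofs diverge exactly at the point you flag as ``where the real work lies,'' namely the mismatch between the weighted constraint $\sum_i \Vol(S_i)\,y_i = 0$ and the unweighted quantity being estimated. The paper's resolution is a recentering step: it asserts
\[
\min_{\vv \perp \mathbf{N}\mathbf{1}} \frac{\sum_{i<j} e(S_i,S_j)(\vv_i-\vv_j)^2}{\sum_i \Vol(S_i)\,\vv_i^2} \;\geq\; \min_{\vv \perp \mathbf{1}} \frac{\sum_{i<j} e(S_i,S_j)(\vv_i-\vv_j)^2}{\sum_i \Vol(S_i)\,\vv_i^2},
\]
and then, after factoring out $\gamma(\pi)$, recognizes the relaxed quotient as the Rayleigh quotient of the Laplacian of $K_k$, all of whose nonzero eigenvalues equal $k$. (That asserted inequality is not a set containment --- $\{\vv \perp \mathbf{N}\mathbf{1}\}$ is not contained in $\{\vv \perp \mathbf{1}\}$ --- but it is valid because the numerator is invariant under shifts $\vv \mapsto \vv - c\mathbf{1}$ while recentering to mean zero only increases the denominator; the paper states it without comment.) You instead keep the weighted constraint, expand the numerator with the identity $\tfrac12\sum_{i\neq j}(y_i-y_j)^2 = k\sum_i y_i^2 - (\sum_i y_i)^2$, and absorb the cross term $(\sum_i y_i)^2$ by Cauchy--Schwarz against the nonnegative weights $V - \Vol(S_i)$, using $\sum_i (V-\Vol(S_i)) = kV - \Vol(G) \leq kV$. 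Both routes are sound: the paper's is shorter once one grants the recentering step and the $K_k$ spectral fact, while yours is fully self-contained and in effect supplies the justification the paper omits (your Cauchy--Schwarz computation is a quantitative form of that recentering). Your treatment of the easy half $\theta_{k-1} \leq k\gamma^*(\pi)$ matches what the paper leaves as ``analogous,'' and the closing tightness remark, while a nice sanity check, is not needed for the lemma.
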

\begin{proof}
	Recall that $\nN$ is the diagonal matrix recording the row sums of $\mathbf{A}(G/\pi)$. From the Rayleigh quotient expression for the eigenvalues and the Courant-Fisher Theorem (see for instance \cite[Chapter 2]{BrouwerHaemers}), it follows that
	\begin{align*}
	\theta_1 & = \min_{\vv \perp \nN \cdot \mathbf 1} \frac{(\nN^{1/2}\vv)^T \lL (\nN^{1/2}\vv) }{(\nN^{1/2}\vv)^T (\nN^{1/2}\vv)} \\ &  = \min_{\vv \perp \nN \cdot \mathbf 1} \frac{\sum_{i,j} (\vv_i - \vv_j)^2 e(S_i,S_j)}{\sum_{i} \vv_i^2 \Vol S_i} 
	\\ &  \geq \min_{\vv \perp \mathbf 1} \frac{\sum_{i,j} (\vv_i - \vv_j)^2 e(S_i,S_j)}{\sum_{i} \vv_i^2 \Vol S_i}
	\\ & \geq \gamma(\pi) \min_{\vv \perp \mathbf 1} \frac{\sum_{i,j} (\vv_i - \vv_j)^2}{\sum_{i} \vv_i^2}\\ & = \gamma(\pi) k,
	\end{align*}
	where the last equality follows from the fact that all non-zero eigenvalues of the conventional Laplacian matrix $(k-1)\mathbf I - \aA(K_k)$ of the complete graph on $k$ vertices are equal to $k$. The other inequality from the statement follows analogously.
\end{proof}

\begin{theorem}\label{thm:4}
Let $G$ be graph. Fix $k$ between $1$ and $n$. Recall the definitions of $\psi_k(G)$ and $\phi_k(G)$ from the beginning of this section. Again, let $\lambda_{0} \leq\lambda_1 \leq ... \leq \lambda_{n-1}$ be the eigenvalues of $\lL(G)$, the normalized Laplacian matrix of $G$. Then
    \begin{enumerate}[(i)]
        \item $\psi_k(G)k \le \lambda_{n-(k-1)}$, and
        \item $\phi_k(G)k \ge \lambda_{k-1}$.
    \end{enumerate}
\end{theorem}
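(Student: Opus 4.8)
The plan is to apply Cauchy interlacing to the pair $\bigl(\lL(G),\,\lL(G/\pi)\bigr)$ for an arbitrary $k$-partition $\pi$, chain the resulting interlacing inequalities with Lemma \ref{lem:inequalities}, and then optimize over all $k$-partitions. First I would fix a $k$-partition $\pi = \{S_1,\dots,S_k\}$ with characteristic matrix $\mathbf{S}$ and set $\mathbf{P} = \mathbf{D}^{1/2}\mathbf{S}\mathbf{N}^{-1/2}$ exactly as before. The key observation is that Equations \eqref{eq:PTP=I} and \eqref{eq:interlacing} never used that $\pi$ is a \emph{proper} colouring, only that $\mathbf{S}$ is the characteristic matrix of a partition; hence $\mathbf{P}^T\mathbf{P} = \mathbf{I}$ and $\mathbf{P}^T\lL(G)\mathbf{P} = \lL(G/\pi)$ remain valid (the quotient $G/\pi$ may now carry loops, but this does not affect the identity). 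Therefore Theorem \ref{thm:interlacing}(i) applies with $\mM = \lL(G)$, $\nN = \lL(G/\pi)$, and $m = k$.

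Next I would translate the interlacing bound into the increasing-order convention of the statement. Writing $0 = \theta_0 \le \theta_1 \le \dots \le \theta_{k-1}$ for the eigenvalues of $\lL(G/\pi)$, as in Lemma \ref{lem:inequalities}, Theorem \ref{thm:interlacing}(i) gives, after re-indexing from decreasing to increasing order,
\[\lambda_j \le \theta_j \le \lambda_{n-k+j}, \qquad j = 0,1,\dots,k-1.\]
In particular $\theta_1 \le \lambda_{n-k+1} = \lambda_{n-(k-1)}$ (taking $j=1$) and $\lambda_{k-1} \le \theta_{k-1}$ (taking $j=k-1$).

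Finally I would feed in Lemma \ref{lem:inequalities}. For part (i), that lemma gives $k\gamma(\pi) \le \theta_1$, so $k\gamma(\pi) \le \theta_1 \le \lambda_{n-(k-1)}$; since this holds for every $k$-partition $\pi$, taking the maximum over $\pi$ yields $\psi_k(G)\,k \le \lambda_{n-(k-1)}$. For part (ii), the lemma gives $\theta_{k-1} \le k\gamma^*(\pi)$, so $\lambda_{k-1} \le \theta_{k-1} \le k\gamma^*(\pi)$; taking the minimum over all $k$-partitions $\pi$ yields $\lambda_{k-1} \le \phi_k(G)\,k$, which is the claim.

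Once the pieces are assembled the argument is just a two-step chaining, so I do not anticipate a substantive obstacle. The only points demanding care are bookkeeping ones: verifying that the interlacing construction, originally presented for proper colourings, carries over unchanged to arbitrary $k$-partitions (it does, precisely because \eqref{eq:PTP=I} and \eqref{eq:interlacing} did not invoke properness), and correctly aligning the decreasing-order indexing of Theorem \ref{thm:interlacing} with the increasing-order eigenvalues $\lambda_0 \le \dots \le \lambda_{n-1}$ so that $\theta_1$ is pinned above by $\lambda_{n-(k-1)}$ and $\theta_{k-1}$ is pinned below by $\lambda_{k-1}$.
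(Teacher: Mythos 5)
Your proposal is correct and follows exactly the paper's (very terse) proof: apply Lemma \ref{lem:inequalities} to an arbitrary $k$-partition $\pi$, chain it with Cauchy interlacing of $\lL(G/\pi)$ in $\lL(G)$ via the matrix $\mathbf{P} = \mathbf{D}^{1/2}\mathbf{S}\mathbf{N}^{-1/2}$, and optimize over $\pi$. The details you fill in --- that Equations \eqref{eq:PTP=I} and \eqref{eq:interlacing} hold for arbitrary partitions, not just proper colourings, and the careful re-indexing between decreasing and increasing eigenvalue orderings --- are exactly the ones the paper leaves implicit, and you handle them correctly.
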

\begin{proof}
    This is an immediate consequence of Lemma \ref{lem:inequalities} and interlacing.
\end{proof}

\begin{corollary}
 Suppose $G$ has a regular $k$-colouring, and $\sigma_{k-1} = k$. Then $\lambda_{n-1} = k/(k-1)$.
\end{corollary}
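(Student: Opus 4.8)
The plan is to run a short squeeze argument: combine the lower bound on $\lambda_{n-(k-1)}$ furnished by Theorem~\ref{thm:4}(i) with the hypothesis $\sigma_{k-1}=k$, and show that the two can only be compatible when all of the $k-1$ largest eigenvalues coincide with $k/(k-1)$.

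First I would extract the eigenvalue bound from the regular colouring. Since $G$ admits a regular $k$-colouring $\pi$, Lemma~\ref{lem:regcol} gives $\gamma(\pi)=\tfrac{1}{k-1}$, whence $\psi_k(G)=\max_{\pi'}\gamma(\pi')\ge \tfrac{1}{k-1}$. Plugging this into Theorem~\ref{thm:4}(i) yields
\[
\lambda_{n-(k-1)} \;\ge\; k\,\psi_k(G) \;\ge\; \frac{k}{k-1}.
\]

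Next I would note that $\lambda_{n-(k-1)}$ is precisely the smallest of the $k-1$ eigenvalues whose sum defines $\sigma_{k-1}$, so each summand satisfies $\lambda_{n-j}\ge \lambda_{n-(k-1)}$ for $1\le j\le k-1$. Hence
\[
k \;=\; \sigma_{k-1} \;=\; \sum_{j=1}^{k-1}\lambda_{n-j} \;\ge\; (k-1)\,\lambda_{n-(k-1)} \;\ge\; (k-1)\cdot\frac{k}{k-1} \;=\; k,
\]
so equality holds throughout. The equality $\sum_{j=1}^{k-1}\lambda_{n-j}=(k-1)\lambda_{n-(k-1)}$ forces every one of these eigenvalues, including the largest $\lambda_{n-1}$, to equal $\lambda_{n-(k-1)}$, while $(k-1)\lambda_{n-(k-1)}=k$ pins that common value at $k/(k-1)$. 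Therefore $\lambda_{n-1}=k/(k-1)$.

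I do not anticipate a genuine obstacle: the content is entirely in identifying the right eigenvalue to bound. The only point requiring care is that Theorem~\ref{thm:4}(i) controls $\lambda_{n-(k-1)}$ rather than $\lambda_{n-1}$ directly; it is exactly this eigenvalue whose $(k-1)$-fold multiple lower-bounds $\sigma_{k-1}$, which is why the hypothesis $\sigma_{k-1}=k$ closes the gap and propagates the bound up to $\lambda_{n-1}$. I would also tacitly restrict to $k\ge 2$, since $\sigma_{k-1}=k$ cannot hold for $k=1$.
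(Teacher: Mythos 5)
Your proposal is correct and follows essentially the same route as the paper: the authors likewise combine Lemma~\ref{lem:regcol} (giving $\psi_k(G)=1/(k-1)$) with Theorem~\ref{thm:4}(i) to get $\lambda_{n-(k-1)}\geq k/(k-1)$, and then use $\sigma_{k-1}=k$ to force equality. The only difference is that you spell out the final squeeze $k=\sigma_{k-1}\geq(k-1)\lambda_{n-(k-1)}\geq k$ explicitly, which the paper leaves implicit.
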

\begin{proof}
From Lemma \ref{lem:regcol}, it follows that $\psi_k(G) = 1/(k-1)$. From Theorem \ref{thm:4}, we have $k/(k-1) \leq \lambda_{n-(k-1)}$, which, in addition to $\sigma_{k-1} = k$, leads to $\lambda_{n-1} = k/(k-1)$.
\end{proof}

The concept of regular colourings also interacts nicely with the results in the previous section.

\begin{theorem}\label{thm:regular}
    Suppose $\lambda_{n-1} = k/(k-1)$ and that $G$ admits $k$-colouring. Then any $k$-colouring of $G$ is regular.
\end{theorem}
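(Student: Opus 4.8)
The plan is to fix an arbitrary $k$-colouring $\pi = \{S_1,\dots,S_k\}$ of $G$ and show that the hypothesis forces the spectrum of $\lL(G/\pi)$ to coincide exactly with that of the normalized Laplacian of the complete graph $K_k$. From this rigid spectral structure I can then read off that all colour classes have equal volume and that $e(S_i,S_j)$ is independent of the pair $i \neq j$, which is precisely the definition of a regular colouring.

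First I would pin down the eigenvalues $0 = \theta_0 \le \theta_1 \le \dots \le \theta_{k-1}$ of $\lL(G/\pi)$ using two facts. Since $\pi$ is proper, the diagonal of $\aA(G/\pi)$ vanishes, so $\lL(G/\pi)$ has all-ones diagonal and $\sum_{j=0}^{k-1}\theta_j = \tr \lL(G/\pi) = k$; as $\theta_0 = 0$ with eigenvector $\nN^{1/2}\mathbf 1$, this gives $\sum_{j=1}^{k-1}\theta_j = k$. On the other hand, interlacing (Theorem \ref{thm:interlacing}(i)) yields $\theta_j \le \lambda_{n-k+j} \le \lambda_{n-1} = k/(k-1)$ for each $j \in \{1,\dots,k-1\}$. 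Thus $\theta_1,\dots,\theta_{k-1}$ are $k-1$ reals, each at most $k/(k-1)$, summing to $(k-1)\cdot\tfrac{k}{k-1}$, which is possible only if $\theta_1 = \dots = \theta_{k-1} = k/(k-1)$. In particular $0$ is a simple eigenvalue of $\lL(G/\pi)$.

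Next I would exploit that a symmetric matrix with only two distinct eigenvalues is determined by its eigenspaces. Writing $\mathbf u = \nN^{1/2}\mathbf 1 / \lVert \nN^{1/2}\mathbf 1\rVert$, the $0$-eigenspace is exactly $\mathrm{span}(\mathbf u)$ and its orthogonal complement is the $k/(k-1)$-eigenspace, so $\lL(G/\pi) = \tfrac{k}{k-1}(\mathbf I - \mathbf u\mathbf u^T)$ and therefore
\[\nN^{-1/2}\aA(G/\pi)\nN^{-1/2} = \tfrac{k}{k-1}\mathbf u\mathbf u^T - \tfrac{1}{k-1}\mathbf I.\]
Since $\lVert \nN^{1/2}\mathbf 1\rVert^2 = \Vol G$, the $i$-th coordinate of $\mathbf u$ satisfies $\mathbf u_i^2 = \Vol(S_i)/\Vol G$. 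Comparing the $(i,i)$ entries of the displayed identity, and using that the left diagonal vanishes (again because $\pi$ is proper), forces $\Vol(S_i) = \Vol(G)/k$ for every $i$. Comparing the off-diagonal entries then yields $e(S_i,S_j) = \tfrac{k}{(k-1)\Vol G}\Vol(S_i)\Vol(S_j) = \Vol(G)/\bigl(k(k-1)\bigr)$, independent of $i \neq j$; hence $\pi$ is regular.

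The step I expect to be most delicate is the spectral collapse: I must argue that the hypothesis does not merely bound the $\theta_j$ but pins each of them to the common value $k/(k-1)$, and that eigenvalue $0$ is consequently simple, so that its eigenspace is genuinely one-dimensional and equal to $\mathrm{span}(\nN^{1/2}\mathbf 1)$. Both follow from the eigenvalue count above, and everything afterwards is routine entrywise bookkeeping. The argument is self-contained, relying only on the construction of $\mathbf P$ and interlacing, and it is consistent with Theorem \ref{thm:3}: since $\sigma_{k-1}(G) = (k-1)\lambda_{n-1} = k$, Lemma \ref{thm:tight} already guarantees tight interlacing, so the present conclusion refines "equitable'' to "regular''.
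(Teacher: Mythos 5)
Your proof is correct and follows essentially the same route as the paper: interlacing plus the trace identity force all nonzero eigenvalues of $\lL(G/\pi)$ to equal $k/(k-1)$, after which the two-eigenvalue rigidity shows the quotient is a complete graph with equal edge weights. The only difference is that you spell out, via the rank-one identity $\lL(G/\pi) = \tfrac{k}{k-1}(\mathbf I - \mathbf u\mathbf u^T)$ and the entrywise comparison, the final step that the paper dismisses as ``straightforward to verify.''
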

\begin{proof}
From Theorem \ref{thm:bound}, it follows that $\sigma_{k-1} \geq k$, and thus
\[\lambda_{n-(k-1)} = \cdots = \lambda_{n-1} = \frac{k}{k-1}.\]
As a consequence, just like we argued in Theorem \ref{thm:3}, it follows that all eigenvalues of the quotient graph are equal to $k/(k-1)$, except for the $0$ eigenvalue. It is straightforward to verify that the quotient graph is a complete graph where all edges have the same edge weight.
\end{proof}

\begin{corollary}\label{cor:divis}
 Suppose $\lambda_{n-1} = k/(k-1)$ and that $G$ admits $k$-colouring. Then any vertex $a$ has precisely $d(a)/(k-1)$ neighbours of each colour different from its own.
\end{corollary}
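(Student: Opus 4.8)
The plan is to combine the two structural facts already established under the hypothesis $\lambda_{n-1} = k/(k-1)$: that every $k$-colouring is regular (Theorem \ref{thm:regular}) and that every $k$-colouring is equitable with respect to $\dD^{-1}\aA$ (Theorem \ref{thm:3}). Fix any $k$-colouring $\pi = \{S_1, \dots, S_k\}$ of $G$. By Theorem \ref{thm:regular} there is a constant $m$ with $e(S_i, S_j) = m$ for every pair $i \neq j$. By Theorem \ref{thm:3}, $\pi$ is equitable; as extracted in the proof of Lemma \ref{thm:equitable}, this means that for every vertex $a \in S_i$ and every class $S_j$,
\[\frac{e(a, S_j)}{d(a)} = \frac{e(S_i, S_j)}{\Vol(S_i)}.\]

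Next I would evaluate $\Vol(S_i)$ using the fact that $\pi$ is a proper colouring. Since no edge has both endpoints in $S_i$, every edge incident to $S_i$ leaves it, so $\Vol(S_i) = \sum_{j \neq i} e(S_i, S_j) = (k-1)m$. Substituting the regularity value $e(S_i, S_j) = m$ into the equitable identity then yields, for $j \neq i$,
\[\frac{e(a, S_j)}{d(a)} = \frac{m}{(k-1)m} = \frac{1}{k-1},\]
that is, $e(a, S_j) = d(a)/(k-1)$. As $G$ is simple, $e(a, S_j)$ counts exactly the number of neighbours of $a$ that receive colour $j$, which is the desired conclusion.

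The argument is essentially a bookkeeping consequence of the two theorems already in hand, so there is no serious analytic obstacle. The only point requiring care is the computation $\Vol(S_i) = (k-1)m$: the identity expressing $\Vol(S_i)$ as the sum of edges leaving $S_i$ relies crucially on properness, since an internal edge would otherwise be double-counted. This is precisely where the hypothesis that $\pi$ is a \emph{colouring}, rather than an arbitrary $k$-partition, enters.
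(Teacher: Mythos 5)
Your proof is correct and follows essentially the same route as the paper: both arguments invoke Theorem \ref{thm:3} for equitability with respect to $\dD^{-1}\aA$ and Theorem \ref{thm:regular} for regularity, and then combine the two facts. The only difference is presentational --- the paper packages the combination as the matrix identity $\lL(G)\pP = \pP(\alpha \mathbf I + \beta \mathbf J)$ and reads off the rows, whereas you carry out the same bookkeeping entry-by-entry via $e(a,S_j)/d(a) = e(S_i,S_j)/\Vol(S_i)$ together with $\Vol(S_i) = (k-1)m$, which is a more explicit rendering of the identical argument.
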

\begin{proof}
Let $\pi$ be a $k$-colouring. From Theorem \ref{thm:3} and Theorem \ref{thm:regular}, we know that $\pi$ is equitable with respect to $\dD^{-1} \aA$ and is a regular colouring. That is, there are constants $\alpha$ and $\beta$ such that
\[\lL(G) \pP = \pP (\alpha \mathbf I + \beta \mathbf J),\]
where $\mathbf J$ is the all $1$s matrix. From this it follows that the rows of $\lL(G) \pP$ are constant, with the exception of the entry in the column that corresponds to the class that contains the vertex of the row. This is equivalent to what we wanted to prove.
\end{proof}

Note that Corollary \ref{cor:divis} offers a quite strong integrality condition on the degrees of all vertices of a graph that satisfy Lemma \ref{lem:largesteval} with equality.

\section{$m$-uniform linear hypergraphs}

A natural question at this point is whether there are examples of graphs whose colourings are equitable with respect to $\dD^{-1} \aA$ or regular. A \textit{hypergraph} is a graph in which edges are allowed to have more than $2$ vertices. A uniform hypergraph has all edges with the same number of vertices. A colouring of the vertex set of a hypergraph is strong if the vertices in each edge all have distinct colours. Finally, a linear hypergraph is such that any two edges intersect in at most one vertex. The standard reference is \cite{BergeGraphHypergraphs}. Linear hypergraphs appear naturally in connection to partial linear spaces.

Given an $m$-uniform linear hypergraph, one forms the underlying graph by replacing each edge by a clique on $m$ vertices. Thus $m$-uniform linear hypergraphs correspond to graphs where all vertices belong to cliques of size $m$, and no two of these have an edge in common. Thus, strongly colouring the hypergraph or properly colouring its underlying graph amounts to the same task. The topic of colouring such graphs has received a considerable amount of attention because of the well-known Erd\H{o}s-Faber-Lovász conjecture, which proposes that any $m$-uniform linear hypergraph on $m$ edges is $m$-strongly colourable (it obviously cannot be coloured with fewer colours). Now we observe that any $m$-strong colouring of an $m$-uniform linear hypergraph on $e$ hyperedges (or any $m$-colouring of its underlying graph $G$) must necessarily be
\begin{itemize}
    \item equitable with respect $\dD(G)^{-1} \aA(G)$, as for any fixed colour, the number of neighbours of this colour that a vertex has is proportional to the its degree; and
    \item regular, as each colour class has precisely $e$ edges in $G$ towards any other colour class.
\end{itemize}
Indeed, we can show that both facts above relate quite well with the topic of this paper.
\begin{lemma}\label{lem:uniformlinearhypergraph}
Assume $G$ is the underlying graph of an $m$-uniform linear hypergraph. Then the largest eigenvalue $\lambda_{n-1}$ of $\lL(G/\pi)$ satisfies
\[\lambda_{n-1} \leq \frac{m}{m-1}.\]
\end{lemma}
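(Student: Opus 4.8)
The plan is to prove the bound for the largest eigenvalue $\lambda_{n-1}$ of the normalized Laplacian of the underlying graph $G$ directly, via its Rayleigh quotient, exploiting that $G$ is an edge-disjoint union of $m$-cliques, one per hyperedge. (If the statement is read for a quotient $G/\pi$, it is then immediate from Theorem \ref{thm:interlacing}(i): since $\lL(G/\pi) = \pP^T \lL \pP$ with $\pP^T\pP = \mathbf I$, the largest eigenvalue of $\lL(G/\pi)$ cannot exceed $\lambda_{n-1}$ of $\lL(G)$.) Recall that
\[\lambda_{n-1} = \max_{\mathbf x \neq 0} \frac{\mathbf x^T \lL \mathbf x}{\mathbf x^T \mathbf x} = \max_{\mathbf x \neq 0}\frac{\sum_{uv \in E(G)}\big(x_u/\sqrt{d(u)} - x_v/\sqrt{d(v)}\big)^2}{\sum_v x_v^2}.\]
Writing $z_v = x_v/\sqrt{d(v)}$, so that $\sum_v x_v^2 = \sum_v d(v) z_v^2$, it suffices to prove the quadratic-form inequality
\[\sum_{uv \in E(G)}(z_u - z_v)^2 \;\leq\; \frac{m}{m-1}\sum_v d(v)\, z_v^2 \qquad \text{for all } \mathbf z.\]

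Before that I would record the two consequences of linearity that drive the estimate. Because any two hyperedges share at most one vertex, every edge of $G$ lies in exactly one of the $m$-cliques, so $E(G)$ decomposes as a disjoint union over the cliques $C$; and if $t_v$ is the number of cliques through $v$, then the neighbours contributed to $v$ by distinct cliques are pairwise distinct, whence $d(v) = t_v(m-1)$. These are the only uses of linearity, and it is the exactness of this degree identity that makes the constant $m/(m-1)$ sharp.

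Next I would establish the inequality one clique at a time. For a clique $C$ on $m$ vertices, the elementary identity
\[\sum_{\{u,v\}\subseteq C}(z_u - z_v)^2 = m\sum_{w \in C} z_w^2 - \Big(\sum_{w \in C} z_w\Big)^2 \;\leq\; m\sum_{w \in C} z_w^2\]
holds, with equality exactly when $\sum_{w \in C} z_w = 0$. Summing over all cliques, using the edge decomposition on the left and $\sum_C \sum_{w \in C} z_w^2 = \sum_v t_v z_v^2$ on the right, gives
\[\sum_{uv \in E(G)}(z_u - z_v)^2 \leq m\sum_v t_v z_v^2 = \frac{m}{m-1}\sum_v d(v)\, z_v^2,\]
where the last step is the degree identity $d(v) = t_v(m-1)$. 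This is precisely the claimed bound.

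I do not expect a serious obstacle: the argument collapses once the edge decomposition and the degree identity are in place, both of which are immediate from linearity. The one step deserving care is the bookkeeping when passing from a sum over cliques to a sum over vertices, i.e.\ verifying that each $v$ is counted with multiplicity exactly $t_v$ so that the clique-wise bounds assemble into the single constant $m/(m-1)$. An equivalent operator-theoretic route would be to show $\dD^{-1/2}\aA\dD^{-1/2} \succeq -\tfrac{1}{m-1}\mathbf I$ by writing $\aA = \sum_C \aA(C)$, using $\aA(C) \succeq -\mathbf E_C$ (the clique adjacency matrix has least eigenvalue $-1$, where $\mathbf E_C$ is the diagonal indicator of $C$), and observing $\dD^{-1/2}\big(\sum_C \mathbf E_C\big)\dD^{-1/2} = \tfrac{1}{m-1}\mathbf I$; then $\lambda_{n-1} = 1 - \mu_{\min}\big(\dD^{-1/2}\aA\dD^{-1/2}\big) \leq \tfrac{m}{m-1}$.
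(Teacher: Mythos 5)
Your proof is correct and follows essentially the same route as the paper's: both decompose the Rayleigh quotient of $\lL(G)$ over the edge-disjoint $m$-cliques, use linearity to split the degrees as $d(v) = t_v(m-1)$, and bound each clique's contribution by $m/(m-1)$. The only cosmetic difference is that you establish the per-clique bound via the identity $\sum_{\{u,v\}\subseteq C}(z_u-z_v)^2 = m\sum_{w\in C} z_w^2 - \bigl(\sum_{w\in C} z_w\bigr)^2$ and then sum these inequalities, whereas the paper bounds the ratio of sums by the maximum of the per-clique Rayleigh quotients and quotes $\lambda_{m-1}(K_m) = m/(m-1)$; these are the same mechanism in different clothing.
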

\begin{proof}
Let $K_{(1)},\hdots,K_{(e)}$ denote the cliques of $G$. Then, for any $\vv \in \R^{V(G)}$, the Rayleigh quotient of $\ww = \dD^{1/2} \vv$ gives
\begin{align*}
\frac{\ww^T \lL \ww}{\ww^T\ww} & = \frac{\sum_{ab \in E(G)}(\vv_a-\vv_b)^2}{\sum_{a\in V(G)} \vv_a^2\ d(a)} \\
& = \frac{\sum_{\ell = 1}^e \sum_{ab \in E(K_{(\ell)})}(\vv_a-\vv_b)^2 }{\sum_{\ell = 1}^e \sum_{a \in V(K_{(\ell)})} \vv_a^2\ d_{K_{(\ell)}}(a)} \\
& \leq \max_{\ell} \frac{\sum_{ab \in E(K_{(\ell)})}(\vv_a-\vv_b)^2}{\sum_{a \in V(K_{(\ell)})} \vv_a^2\ d_{K_{(\ell)}}(a)} \\
& \leq \max_{\ell} \lambda_{m-1}(K_{(\ell)}) \\
& = \frac{m}{m-1}.
\end{align*}
\end{proof}

As a consequence of Lemma \ref{lem:largesteval}, any underlying graph of an $m$-uniform linear hypergraph that can be $m$ colourable must meet the bound above with equality. We can however show directly that $\lambda_{n-1}(\lL(G))$ is equal to $m/(m-1)$ in several cases, including those coming from the Erd\H{o}s-Faber-Lovász conjecture.
\begin{theorem}
Let $G$ be a graph on $n$ vertices, and assume $G$ is the underlying graph of an $m$-uniform linear hypergraph on $e$ hyperedges. Assume $n-e > 0$. Then the largest eigenvalue $\lambda_{n-1}$ of $\lL(G/\pi)$ satisfies
\[\lambda_{n-1} = \frac{m}{m-1}.\]
\end{theorem}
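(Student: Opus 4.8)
The plan is to upgrade the inequality of Lemma~\ref{lem:uniformlinearhypergraph} to an equality by exhibiting a single test vector that attains the bound. Note first that the matrix in question is the $n \times n$ normalized Laplacian: taking $\pi$ to be the partition of $V(G)$ into singletons gives $\lL(G/\pi) = \lL(G)$, which is consistent both with the index $n-1$ on its largest eigenvalue and with the object analysed in Lemma~\ref{lem:uniformlinearhypergraph}. Since that lemma already gives $\lambda_{n-1} \le m/(m-1)$, the variational characterisation $\lambda_{n-1} = \max_{\ww \neq 0} (\ww^T \lL \ww)/(\ww^T \ww)$ reduces the problem to producing one nonzero $\ww$ whose Rayleigh quotient equals $m/(m-1)$.

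I would reuse the clique decomposition from the proof of Lemma~\ref{lem:uniformlinearhypergraph}. Writing $K_{(1)}, \dots, K_{(e)}$ for the $m$-cliques and setting $\ww = \dD^{1/2} \vv$, the Rayleigh quotient becomes the mediant $\big(\sum_\ell N_\ell\big)/\big(\sum_\ell D_\ell\big)$, where $N_\ell = \sum_{ab \in E(K_{(\ell)})}(\vv_a - \vv_b)^2$ and $D_\ell = (m-1)\sum_{a \in V(K_{(\ell)})} \vv_a^2$; here the numerator splits cleanly because each edge of $G$ lies in exactly one clique, and the denominator because $d(a) = (m-1)\cdot |\{\ell : a \in K_{(\ell)}\}|$. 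The relevant local fact is that $\lL(K_m)$ has largest eigenvalue $m/(m-1)$, attained exactly by the vectors $\vv|_{K_{(\ell)}}$ that sum to zero on the clique. Consequently, if $\vv$ sums to zero on every clique, then $N_\ell = \tfrac{m}{m-1} D_\ell$ for each $\ell$, and summing gives global Rayleigh quotient $m/(m-1)$.

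The existence of such a $\vv$ is precisely where the hypothesis $n - e > 0$ is used, and this is the heart of the matter. The constraints $\sum_{a \in V(K_{(\ell)})} \vv_a = 0$ for $\ell = 1, \dots, e$ form a homogeneous system of $e$ equations in $n$ unknowns, whose solution space has dimension at least $n - e > 0$; hence a nonzero $\vv$ exists. Because $G$ has no isolated vertices, $\dD$ is invertible, so $\ww = \dD^{1/2} \vv \neq 0$ and the denominator $\sum_\ell D_\ell = \ww^T \ww$ is positive, making the quotient legitimate.

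The step demanding the most care is the mediant bookkeeping. I must verify that the clique condition $\sum_a \vv_a = 0$ genuinely picks out the $m/(m-1)$-eigenspace of $\lL(K_m)$ — regularity of $K_m$ is what makes the degree weighting uniform, so the normalized sum-zero condition coincides with the plain one — and that cliques on which $\vv$ vanishes contribute $N_\ell = D_\ell = 0$ and so leave the identity $\sum_\ell N_\ell = \tfrac{m}{m-1}\sum_\ell D_\ell$ intact. With these checks in place, I obtain $\lambda_{n-1} \ge m/(m-1)$, which combines with Lemma~\ref{lem:uniformlinearhypergraph} to give $\lambda_{n-1} = m/(m-1)$.
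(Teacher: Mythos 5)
Your proposal is correct and follows essentially the same route as the paper: both obtain the upper bound from Lemma~\ref{lem:uniformlinearhypergraph}, use the dimension count $n-e>0$ to produce a nonzero $\vv$ summing to zero on every clique, and evaluate the clique-decomposed Rayleigh quotient of $\ww = \dD^{1/2}\vv$ to get $m/(m-1)$ exactly. Your only deviation is a minor refinement: by noting the per-clique identity $N_\ell = \tfrac{m}{m-1}D_\ell$ (including the trivial case $N_\ell = D_\ell = 0$) and summing, you avoid the paper's $\min_\ell$ step, which is undefined on cliques where $\vv$ vanishes.
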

\begin{proof}
In $\R^n$, consider the span of vectors which are constant at each of the cliques. This has dimension at most $e$, the number of cliques. Hence the space of vectors which sum to zero on all cliques has dimension at least $n-e$. Let $\vv$ be one such vector, and, as before, $\ww = \dD^{1/2}\vv$. Then
\begin{align*}
\frac{\ww^T \lL \ww}{\ww^T\ww} & = \frac{\sum_{ab \in E(G)}(\vv_a-\vv_b)^2}{\sum_{a\in V(G)} \vv_a^2\ d(a)} \\
& = \frac{\sum_{\ell = 1}^e \sum_{ab \in E(K_{(\ell)})}(\vv_a-\vv_b)^2 }{\sum_{\ell = 1}^e \sum_{a \in V(K_{(\ell)})} \vv_a^2\ d_{K_{(\ell)}}(a)} \\
& \geq \min_{\ell} \frac{\sum_{ab \in E(K_{(\ell)})}(\vv_a-\vv_b)^2}{\sum_{a \in V(K_{(\ell)})} \vv_a^2\ d_{K_{(\ell)}}(a)} \\
& = \lambda_{1}(K_m) = \hdots = \lambda_{m-1}(K_m) \\
& = \frac{m}{m-1}.
\end{align*}
\end{proof}

\section{Concluding remarks}

A natural question is how the bound in Theorem \ref{thm:bound} compares to other eigenvalue bounds. According to our computations, in most cases it tends to be better than Hoffman's bound but slightly worse than the bound in \cite[Proposition 3.6.3 (i)]{BrouwerHaemers} due to Haemers. However, it seems that there are several cases in which our bound performs better, including many of the underlying graphs of uniform linear hypergraphs.

Another line of investigation is to design a spectral algorithm that attempts to find partitions realizing $\phi_k$ or $\psi_k$, and further provide tight approximation guarantees.

\bibliographystyle{plain}
\bibliography{bibevals.bib}

\end{document}